\numberwithin{equation}{section}
\newtheorem{Theorem}{\sc Theorem}%%%[section]
\newtheorem{Definition}[Theorem]{\sc Definition}
\newtheorem{Proposition}[Theorem]{\sc Proposition}
\newcommand{\R}{{\if mm {\rm I}\mkern -3mu{\rm R}\else \leavevmode
        \hbox{I}\kern -.17em\hbox{R} \fi}}
\newcommand{\wto}{ \ \stackrel{w} {\longrightarrow} \ }
\def\sqr#1#2{{
        \vcenter{
            \vbox{\hrule height.#2pt
                \hbox{\vrule width.#2pt height#1pt \kern#1pt
                    \vrule width.#2pt
                }
                \hrule height.#2pt
            }
        }
}}
\def\esssup{\mathop{\rm ess\, sup}}
\def\real{\mathbb{R}}
\def\lista#1
\begin{document}
\title{ Differential inclusion systems with double phase competing operators, convection, and mixed boundary conditions}
\author{
Jinxia Cen\footnote{School of Mathematical Sciences, Zhejiang Normal University, Jinhua 321004, P.R. China. E-mail: jinxcen@163.com},\; 
Salvatore A. Marano\footnote{Dipartimento di Matematica e Informatica, Università degli Studi di Catania, Viale A. Doria 6, 95125 Catania, Italy. E-mail: marano@dmi.unict.it},\; Shengda Zeng\footnote{Corresponding author. National Center for Applied Mathematics in Chongqing and School of Mathematical Sciences, Chongqing Normal University, Chongqing 401331, P.R. China. E-mail:\\ zengshengda@163.com}
}
\date{}
\maketitle
\begin{abstract}
In this paper, a new framework for studying the existence of generalized or strongly generalized solutions to a wide class of inclusion systems involving double-phase, possibly competing differential operators, convection, and mixed boundary conditions is introduced. The technical approach exploits Galerkin's method and a surjective theorem for multifunctions in finite dimensional spaces.    
\end{abstract}
\noindent {\bf Keywords:} Differential inclusion system, double-phase competing operator, convection, mixed boundary conditions, generalized solution, Galerkin's method.\\
{\bf MSC 2020:} 35H30, 35J92, 35D30.
\section{Introduction}\label{Section1}
%\sloppy{}
%
In 2020, Liu and alt. \cite{Liu-Livrea-Motreanu-Zeng} introduced a new class of partial differential inclusions, where the driving operator is competing, i.e., it exhibits the \textit{difference} between two elliptic terms, as a $p$-Laplacian and a $q$-Laplacian. This destroys any ellipticity or monotonicity structure, so traditional tools (like nonlinear regularity theory, comparison principles, etc.) don't work. The reaction is multi-valued but not convective, while solutions (in a suitable sense) are obtained via Galerkin's method and Ekeland's variational principle. The same year, D. Motreanu, in \cite{Mo}, studied the case when (single-valued) right-hand sides depend on the gradient of solutions, which clearly forbids variational methods. Since then, some further papers on related topics have been published. Let us mention \cite{Gambera-Marano-Motreanu-JMAA} for convective systems, \cite{Gambera-Marano-Motreanu-FCAA} treating the full fractional case, \cite{Motreanu-CNSNS2023,Mo1} devoted to hemi-variational inequalites, and \cite{Galewski-Motreanu-AML-2024} for operators with gradient depending weight.

As far as we know, except a few papers (see, e.g., \cite{Mo2,RF} and the references therein), the nowadays literature is mainly focused on Dirichlet boundary-value problems without degenerate effects. However, when studying mechanical contact problems between different materials, constitutive laws are usually formulated through differential inclusions, rather than equations, and elastic operators might be described by double- or multi-phase maps; cf. \cite{Zhikov-1986}. Therefore, the question whether the previous technical approaches are adaptable to solve inclusion systems with double-phase, competing differential operators, convection, and mixed boundary conditions naturally arises. 

The main purpose of this work is to provide a positive answer. In fact, let $\Omega$ be a bounded domain in $\real^N$, $N\geq 2$, having a Lipschitz boundary $\partial\Omega$, which splits into two non-trivial parts $\partial\Omega_1$ and $\partial\Omega_2$, with $\partial\Omega_2$ of positive $(N-1)$-dimensional Hausdorff measure.  We will always assume that
\begin{itemize}
\item[$H(0)$] $p_i,q_i\in (1,N)$ and $\mu_i\in L^\infty(\Omega)_+$, with $i=1,2,3,4$,  satisfy
$$p_{2j-1}>p_{2j},\quad q_{2j-1}>q_{2j},\quad\mu_{2j-1}(z)\ge\mu_{2j}(z) \;\;\mbox{a.e. in }\Omega,$$
where $j=1,2$.
\end{itemize}
Consider the modular function $\mathcal G_i:\Omega\times[0,+\infty)\to [0,+\infty)$ given by
\begin{equation}\label{defG}
\mathcal G_i(z,s):=|s|^{p_i}+\mu_i(z)|s|^{q_i}\quad\forall\, (z,s)\in\Omega\times\real, 
\end{equation}
and define
$$\mathscr D_{p_i,q_i,\mu_i}w:=\mbox{div}\left(|\nabla w|^{p_i-2}\nabla w+\mu_i(z)
|\nabla w(z)|^{q_i-2}\nabla w\right),\quad w\in W^{1,\mathcal G_i}(\Omega);$$
see Section \ref{Sections2} for details. The following inclusion systems, that involve double-phase, possibly competing differential operators, convection, and mixed boundary conditions:
\begin{equation}\label{eqnss1}
\left\{\begin{array}{lll}
-\mathscr D_{p_1,q_1,\mu_1}u+\alpha\mathscr D_{p_2,q_2,\mu_2}u\in h_1(z,u,v,\nabla u,\nabla v) & \mbox{ in $\Omega$},\\
w=0 & \mbox{ on $\partial\Omega_1$},\\
-\frac{\partial u}{\partial n_{1,2}}\in g_1(z,u,v) & \mbox{ on $\partial \Omega_2$},
\end{array}\right.
\end{equation}
\begin{equation}\label{eqnss2}
\left\{\begin{array}{lll}
-\mathscr D_{p_3,q_3,\mu_3}v+\beta\mathscr D_{p_4,q_4,\mu_4}v\in h_2(z,u,v,\nabla u,\nabla v) & \mbox{ in $\Omega$},\\
v=0 & \mbox{ on $\partial\Omega_1$},\\
-\frac{\partial v}{\partial n_{3,4}}\in g_2(z,u,v) & \mbox{ on $\partial \Omega_2$},
\end{array}\right.
\end{equation}
will be investigated. Here, $\alpha,\beta\in\real$, $h_j:\Omega\times\real^2\times\real^{2N}\to 2^\real$ and $g_j:\Omega\times\real^2\to 2^\real$, $j=1,2$, are convex compact-valued multifunctions, while
\begin{align*}
\frac{\partial u}{\partial n_{1,2}}:= 
\left[|\nabla u|^{p_1-2}\nabla u+\mu_1|\nabla u|^{q_1-2}\nabla u
-\alpha(|\nabla u|^{p_2-2}\nabla u+\mu_2|\nabla u|^{q_2-2}\nabla u)\right] \cdot \nu,\\
\frac{\partial v}{\partial n_{3,4}}:=
\left[|\nabla v|^{p_3-2}\nabla v+\mu_3|\nabla v|^{q_3-2}\nabla v
-\beta(|\nabla v|^{p_4-2}\nabla v+\mu_4|\nabla v|^{q_4-2}\nabla v)\right] \cdot\nu,
\end{align*} 
with $\nu$ being the outward unit normal vector to $\partial\Omega$. Under suitable assumptions (cf. $H(h)$ and $H(g)$ below) we will prove that \eqref{eqnss1}--\eqref{eqnss2} posses a \textit{generalized or strong generalized solution} $(u,v)\in W^{1,\mathcal G_1}(\Omega)\times W^{1,\mathcal G_2}(\Omega)$, which reduces to a weak one once competing effects disappear, namely $\max\{\alpha,\beta\}<0$.
\section{Preliminaries and functional framework}\label{Sections2}
Let $X,Y$ be two nonempty sets. A multifunction $\Phi:X\to 2^Y$ is a map from $X$ into the family of all nonempty subsets of $Y$. A function $\varphi:X\to Y$ is called a selection of $\Phi$ when $\varphi(x)\in\Phi(x)$ for every $x\in X$. Given $B\subseteq Y$, put 
$$\Phi^-(B):=\{ x\,\mid\, x\in X\mbox{ and }\Phi(x)\cap B\neq\emptyset\}.$$ 
If $X,Y$ are topological spaces and $\Phi^-(B)$ turns out closed in $X$ for all closed set $B\subseteq Y$ then we say that $\Phi$ is upper semi-continuous. Suppose $(X,\mathcal{F})$ is a measurable space and $Y$ is a topological space. The multifunction $\Phi$ is called measurable when $\Phi^-(B)\in\mathcal{F}$ for every open set $B\subseteq Y$.

The result below will be repeatedly useful. It is stated in \cite[p. 215]{ADZ}.
\begin{Proposition}\label{supresult}
Let $F:\Omega\times\real\to 2^\real$ be a closed-valued multifunction such that
\begin{itemize}
\item $z\mapsto F(z,s)$ is measurable for all $s\in\real$,
\item $s\mapsto F(z,s)$ is upper semi-continuous for a.e. $z\in\Omega$,
\end{itemize}
and let $u:\Omega\to\real$ be measurable. Then the multifunction $z\mapsto F(z,u(z))$ admits a measurable selection.
\end{Proposition}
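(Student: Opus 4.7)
The plan is to combine approximation of $u$ by simple functions with the Kuratowski--Ryll-Nardzewski measurable selection theorem. Concretely, I would approximate $u$ pointwise a.e. by a sequence of simple measurable functions $u_n=\sum_{k=1}^{m_n}c_k^n\chi_{A_k^n}$ converging to $u$. For each $n$, the auxiliary multifunction $\Phi_n(z):=F(z,u_n(z))$ coincides on the atom $A_k^n$ with the measurable multifunction $F(\cdot,c_k^n)$, so $\Phi_n$ is itself measurable with closed, nonempty values; Kuratowski--Ryll-Nardzewski, applied in the Polish space $\real$, then supplies a measurable selection $\varphi_n$ of $\Phi_n$.

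The main obstacle is the passage from $\varphi_n$ to a selection of $F(\cdot,u(\cdot))$, since upper semi-continuity of $F(z,\cdot)$ controls only Kuratowski upper limits, yielding $\mathrm{K}\text{-}\limsup_n F(z,u_n(z))\subseteq F(z,u(z))$ for a.e. $z$. The key tool here is the closed-graph property of $F(z,\cdot)$ in $\real\times\real$, a standard consequence of closed-valuedness together with upper semi-continuity into the normal space $\real$: whenever a subsequence $\varphi_{n_k}(z)$ converges, its limit necessarily belongs to $F(z,u(z))$.

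It therefore suffices to arrange that $\{\varphi_n(z)\}$ admits at least one cluster point for a.e. $z$, which can be ensured by selecting each $\varphi_n$ canonically (for instance, as the minimum-modulus element of $\Phi_n(z)$ with a fixed tie-breaking rule) and combining this with a truncation argument to dispose of the null set of $z$ on which such selections might drift to infinity. Once this boundedness is in place, the Kuratowski upper limit $\Phi^*(z):=\mathrm{K}\text{-}\limsup_n\{\varphi_n(z)\}$ defines a measurable, closed-valued multifunction with nonempty values a.e.\ in $\Omega$ and contained in $F(\cdot,u(\cdot))$; a final application of Kuratowski--Ryll-Nardzewski to $\Phi^*$ then delivers the desired measurable selection, the detailed technical handling of the boundedness step being the subtle point for which one may refer to \cite{ADZ}.
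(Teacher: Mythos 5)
First, a remark on the comparison you were asked to survive: the paper does not prove Proposition \ref{supresult} at all — it is quoted verbatim from \cite[p.~215]{ADZ} — so your argument has to stand entirely on its own.

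It does not, and the failure is located exactly at the step you set aside as ``the subtle point.'' Arranging that the selections $\varphi_n$ of $\Phi_n(z)=F(z,u_n(z))$ have a cluster point for a.e.\ $z$ is not a technicality that a minimum-modulus choice plus a truncation can repair; it is the entire difficulty, and the mechanism you propose is structurally incapable of delivering it. Take $\Omega=(0,1)$, $u(z)=z$, and $F(z,s)=\{(s-z)^{-1}\}$ for $s\neq z$, $F(z,z)=\real$. This $F$ has nonempty closed values, $z\mapsto F(z,s)$ is measurable for each fixed $s$, and $s\mapsto F(z,s)$ is upper semi-continuous for every $z$ (at $s=z$ the only open set containing $F(z,z)=\real$ is $\real$ itself), while $F(\cdot,u(\cdot))\equiv\real$ obviously admits the measurable selection $0$. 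But every simple function $u_n$ satisfies $u_n(z)\neq z$ outside a finite set, so for a.e.\ $z$ the set $\Phi_n(z)$ is a singleton and the \emph{only} admissible selection is $\varphi_n(z)=(u_n(z)-z)^{-1}$, which diverges to infinity for a.e.\ $z$ as soon as $u_n\to u$ pointwise. No tie-breaking rule and no disposal of null sets can create cluster points here: $\Phi^*(z)=\mathrm{K}\text{-}\limsup_n\{\varphi_n(z)\}$ is empty a.e., and your final application of Kuratowski--Ryll-Nardzewski has nothing to select from. The closed-graph inclusion $\mathrm{K}\text{-}\limsup_n F(z,u_n(z))\subseteq F(z,u(z))$ you invoke is correct but points in the unhelpful direction: upper semi-continuity allows the values to collapse along the approximation, so approximating $u$ and passing to the limit in selections cannot by itself recover a point of $F(z,u(z))$. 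Finally, deferring ``the detailed technical handling'' of precisely this step to \cite{ADZ} is circular, since \cite{ADZ} is the source of the whole proposition; an actual proof must work directly with the superposition $z\mapsto F(z,u(z))$ (for instance via measurability of its graph or of suitable extremal single-valued branches of $F$), rather than through limits of selections of $F(\cdot,u_n(\cdot))$.
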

Now, let $(X,\Vert\cdot\Vert)$ be a real normed space with topological dual $X^*$ and duality brackets $\langle\cdot,\cdot\rangle$. Given a nonempty set $A\subseteq X$, define $|A|:=\sup_{x\in A}\Vert x\Vert$. We say that $\varphi:X\to X^*$ enjoys the $(S_+)$-property when
\begin{equation*}
x_n\rightharpoonup x\;\;\mbox{in $X$,}\;\;\limsup_{n\to+\infty}\langle A(x_n),x_n-x\rangle \le 0\implies x_n\to x\;\;\mbox{in $X$.}   
\end{equation*}
A multifunction $\Phi:X\to 2^{X^*}$ is called coercive provided
$$\lim_{\Vert x\Vert\to\infty}\frac{\inf\{\langle x^*,x\rangle\,\mid\, x\in X,\, x^*\in\Phi(x)\}}{\Vert x\Vert}=+\infty\, .$$
The following result is a direct consequence of~\cite[Proposition 3.2.33]{GP1}. 
\begin{Theorem}\label{surjectivetheorem}
Let $X$ be a finite-dimensional normed space and let $\Phi:X\to 2^{X^*}$ be a convex compact-valued multifunction. Suppose $\Phi$ is upper semi-continuous and coercive. Then there exists $\hat{x}\in X$ satisfying $\Phi(\Hat{x})\ni 0$. 
\end{Theorem}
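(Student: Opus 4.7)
The plan is to combine the coercivity hypothesis with a multivalued homotopy-invariance argument. First, coercivity supplies $R>0$ such that $\langle x^{*},x\rangle>0$ whenever $\|x\|\geq R$ and $x^{*}\in\Phi(x)$; in particular, $0\notin \Phi(x)$ on the sphere $\{\|x\|=R\}$. Because $X$ is finite dimensional, I would fix an auxiliary inner product on $X$, let $j:X^{*}\to X$ denote the associated Riesz isomorphism (so that $\langle x^{*},y\rangle=(j(x^{*}),y)$ for all $y\in X$), and replace $\Phi$ by $\tilde{\Phi}:=j\circ\Phi:X\to 2^{X}$. This preserves upper semicontinuity, convex compactness of the values, and the strict positivity of $(\tilde{x},x)$ on $\|x\|=R$ for every $\tilde{x}\in\tilde{\Phi}(x)$.

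On $K:=\overline{B_R}$ I would then consider the straight-line homotopy
$$H(t,x):=(1-t)x+t\,\tilde{\Phi}(x),\qquad(t,x)\in[0,1]\times K.$$
If $0\in H(t,x)$ for some $x$ with $\|x\|=R$ and some $t\in(0,1]$, then $\tilde{x}=-\tfrac{1-t}{t}x$ for some $\tilde{x}\in\tilde{\Phi}(x)$, whence $(\tilde{x},x)\leq 0$, contradicting the coercivity-based choice of $R$; the case $t=0$ is excluded because $\|x\|=R>0$. Hence the homotopy avoids $0$ on $\partial B_R$, and the Brouwer-type topological degree for upper semicontinuous convex-compact-valued maps (constructed in finite dimensions through Cellina's approximation theorem) yields
$$\deg(\tilde{\Phi},B_R,0)=\deg(\mathrm{id},B_R,0)=1.$$
The solution property then produces $\hat{x}\in B_R$ with $0\in\tilde{\Phi}(\hat{x})$, i.e.\ $0\in\Phi(\hat{x})$.

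The principal technical point is not the homotopy-degree computation, which is immediate once coercivity is in hand, but rather the availability of a well-behaved topological degree for upper semicontinuous convex-compact-valued multimaps. In finite dimensions this is standard via Cellina's continuous $\varepsilon$-approximations and passage to the limit. I would, if pressed to be self-contained, bypass degree theory altogether by applying Brouwer's fixed point theorem directly to each such $\varepsilon$-approximation $\phi_\varepsilon$ of $\tilde{\Phi}$ (which inherits an inward-pointing condition on $\partial B_R$ for small $\varepsilon$), producing zeros $x_\varepsilon\in B_R$, extracting by compactness of $\overline{B_R}$ a convergent subsequence $x_\varepsilon\to\hat{x}$, and invoking the closed-graph characterization of upper semicontinuity to conclude $0\in\Phi(\hat{x})$.
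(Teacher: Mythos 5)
Your argument is correct, but it is genuinely different from what the paper does: the paper offers no proof at all for this statement, deriving it in one line as ``a direct consequence of [GP1, Proposition~3.2.33]'' (a surjectivity result for multivalued maps in Gasinski--Papageorgiou), whereas you give a self-contained finite-dimensional argument. Your route --- choose $R$ from coercivity so that $\langle x^*,x\rangle>0$ for all $x^*\in\Phi(x)$ with $\Vert x\Vert=R$, transport $\Phi$ to $2^X$ via a Riesz isomorphism, and run the affine homotopy to the identity inside the multivalued Brouwer degree (or, more elementarily, zero out Cellina approximations by Brouwer's fixed point theorem and pass to the limit using the closed graph of an u.s.c.\ compact-valued map) --- is the standard proof of exactly this kind of ``acute angle'' surjectivity lemma, and every step checks out: the boundary condition $0\notin(1-t)x+t\tilde\Phi(x)$ on $\Vert x\Vert=R$ follows as you say, since $\tilde x=-\frac{1-t}{t}x$ forces $(\tilde x,x)\le 0$. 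What your approach buys is transparency and self-containedness; what the citation buys the authors is brevity. One detail worth making explicit in the approximation variant: to guarantee that $(\phi_\varepsilon(x),x)>0$ persists on $\partial B_R$ for small $\varepsilon$, you need a \emph{uniform} positive lower bound for $\langle x^*,x\rangle$ over a neighborhood of the sphere together with boundedness of $\Phi$ there; both follow from the coercivity limit (which gives $\inf\langle x^*,x\rangle\ge c\Vert x\Vert$ on an annulus) and from the fact that an u.s.c.\ compact-valued multimap sends compact sets to bounded sets, but as written you only assert that the approximations ``inherit'' the boundary condition. This is a routine compactness remark, not a gap in the strategy.
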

Henceforth, $p'$ indicates the conjugate exponent of $p\geq 1$ while
\begin{equation*}
\Vert u\Vert_p:=\left\{ 
\begin{array}{ll}
\left(\int_{\Omega }|v(z)|^q dz\right)^{1/p} & \text{ if }1\leq p<+\infty, \\ 
\phantom{} &  \\ 
\underset{z\in\Omega}{\esssup}\, |u(z)| & \text{ when } p=+\infty,
\end{array}
\right.
\end{equation*}
Moreover, if $p<N$ then
$$p^*:=\frac{Np}{N-p}\quad\mbox{and}\quad p_*:=\frac{(N-1)p}{N-p}.$$ 
Let $\mathcal{G}_i$ be defined by \eqref{defG}. On the Musielak-Orlicz Lebesgue space
$$L^{\mathcal{G}_i}(\Omega):=\left \{w\,\mid\, w:\Omega\to\real\text{ is measurable and }\int_\Omega\mathcal{G}_i(z,w(z))\, dz< +\infty \right\}$$
we will consider the Luxemburg norm
$$\|w\|_{\mathcal{G}_i}:=\inf\left\{\zeta >0\,\mid\,\int_\Omega \mathcal{G}_i\left(z,\frac{w(z)}{\zeta}\right)dz\leq 1\right\},\quad
w\in L^{\mathcal{G}_i}(\Omega).$$
The associated Musielak-Orlicz Sobolev space is 
$$W^{1,\mathcal{G}_i}(\Omega):=\left\{w\in L^{\mathcal{G}_i}(\Omega) \,\mid\,|\nabla w|\in L^{\mathcal{G}_i}(\Omega) \right\},$$
equipped with the norm
$$\|w\|_{1,\mathcal{G}_i}:=\|\nabla w\|_{\mathcal{G}_i} +\|w\|_{\mathcal{G}_i},\quad w\in W^{1,{\mathcal G}_i}(\Omega).$$
It is known \cite{Mu} that both $L^{\mathcal{G}_i}(\Omega)$ and $W^{1,\mathcal{G}_i}(\Omega)$ turn out complete, separable, and reflexive. Put
\begin{equation*}
U:=\{u\in W^{1,\mathcal{G}_1}(\Omega)\,\mid\, u=0\mbox{ on $\partial \Omega_1$}\},\quad
V:=\{v\in W^{1,\mathcal{G}_3}(\Omega)\,\mid\, v=0\mbox{ on $\partial \Omega_1$}\},
\end{equation*}
as well as
$$\|u\|_U:=\|\nabla u\|_{\mathcal G_1}\quad\forall\, u\in U,\quad
\|v\|_V:=\|\nabla v\|_{\mathcal G_3}\quad\forall\, v\in V.$$
By \cite[Lemma 4]{Zeng-Vetro-Nguyen}, both $(U,\|\cdot\|_U)$ and $(V,\|\cdot\|_V)$ are separable reflexive Banach spaces. The symbols $\langle\cdot,\cdot\rangle_1$, $\langle\cdot,\cdot\rangle_2$, and $\langle\cdot,\cdot\rangle_{3}$ indicate the duality brackets of $U$, $V$, and $U\times V$, respectively, while $\lambda_j$, $j=1,2,3,4$, denote the smallest positive constants such that
\begin{equation}\label{embedone}
\begin{split}
\|u\|_{p_1}\le\lambda_1\|u\|_U\;\;\mbox{and}\;\;
\|u\|_{L^{p_1}(\partial\Omega_2)}\le\lambda_2\|u\|_U\;\;\forall\, u\in U,\\
\|v\|_{p_3}\le\lambda_3\|v\|_V\;\;\mbox{and}\;\;
\|v\|_{L^{p_4}(\partial\Omega_2)}\le\lambda_4\|v\|_V\;\;\forall\, v\in V.
\end{split}
\end{equation}
We associate with the driving operator in \eqref{eqnss1} the map $\mathcal D_1:U\to U^*$ 
given by
\begin{equation}\label{defDone}
\begin{split}
\langle\mathcal D_1 u,\varphi\rangle_1:= & \int_\Omega (|\nabla u|^{p_1-2}\nabla u+\mu_1|\nabla u|^{q_1-2}\nabla u)\cdot\nabla\varphi\,dz\\
& -\alpha\int_\Omega(|\nabla u|^{p_2-2}\nabla u+\mu_2|\nabla u|^{q_2-2}\nabla u)
\cdot\nabla\varphi\,dz,\quad u,\varphi\in U.
\end{split}
\end{equation}
Likewise, for \eqref{eqnss2} one considers the function $\mathcal D_2:V\to V^*$ defined by
\begin{equation}\label{defDtwo}
\begin{split}
\langle\mathcal D_2 v,\psi\rangle_2:= & \int_\Omega (|\nabla v(z)|^{p_3-2}\nabla v+\mu_3|\nabla v|^{q_3-2}\nabla v)\cdot\nabla\psi\,dz\\
& -\beta\int_\Omega(|\nabla v|^{p_4-2}\nabla v+\mu_4|\nabla v|^{q_4-2}\nabla v)
\cdot\nabla\psi\,dz, \quad v,\psi\in V.
\end{split}
\end{equation}

Finally, to shorten notation, set, for every $(u,v)\in U\times V$,
\begin{equation}\label{defD}
\mathcal{D}(u,v):=(\mathcal{D}_1 u,\mathcal{D}_2 v),    
\end{equation}
$$\mathcal{S}_{h_1,h_2}(u,v)=\{(\eta^1,\eta^2)\in 
L^{(p_1^*)'}(\Omega)\times L^{(p_3^*)'}(\Omega)\,\mid\,\eta^i\in h_i(\cdot,u,v,
\nabla u,\nabla v),\, i=1,2 \},$$
$$\mathcal{S}_{g_1,g_2}(u,v)=\{(\xi^1,\xi^2)\in 
L^{(p_{1,*})'}(\partial\Omega_2)\times L^{(p_{3,*})'}(\partial\Omega_2)\,\mid\,\xi^i\in g_i(\cdot,u,v),\, i=1,2 \},$$
where, as usual, $p_j^*:=\frac{Np_j}{N-p_j}$ and $p_{j,*}:=\frac{(N-1)p_j}{N-p_j}$, $j=1,3$.

The following notions of generalized solution and strong generalized solution to problem \eqref{eqnss1}--\eqref{eqnss2} are adapted from those in \cite{Gambera-Marano-Motreanu-FCAA,Gambera-Marano-Motreanu-JMAA}.
\begin{Definition}\label{Definitions1}
We say that $(u,v)\in U\times V$ is a generalized solution of \eqref{eqnss1}--\eqref{eqnss2} if there exist three sequences 
$$(u_n,v_n)\in U\times V,\quad (\eta_n^1,\eta_n^2)\in\mathcal{S}_{h_1,h_2}(u_n,v_n),\quad  (\xi_n^1,\xi_n^2)\in\mathcal{S}_{g_1,g_2}(u_n,v_n)$$
satisfying 
\begin{itemize}
\item[{\rm(i)}] $(u_n,v_n)\wto (u,v)$ in $U\times V$,
\item[{\rm(ii)}] $\mathcal D (u_n,v_n)+(\xi_n^1,\xi_n^2)-(\eta_n^1,\eta_n^2) \wto 0$ in $U^*\times V^*$, and
\item[{\rm(iii)}] $\displaystyle{\lim_{n\to\infty}}\langle\mathcal D (u_n,v_n)+(\xi_n^1,\xi_n^2)-(\eta_n^1,\eta_n^2),(u_n-u,v_n-v)\rangle_{3}=0$.
\end{itemize}
\end{Definition}
\begin{Definition}\label{Definitions2}
$(u,v)\in U\times V$ is called a strongly generalized solution to \eqref{eqnss1}--\eqref{eqnss2} when there are sequences 
$$(u_n,v_n)\in U\times V,\quad (\eta_n^1,\eta_n^2)\in\mathcal{S}_{h_1,h_2}(u_n,v_n),\quad  (\xi_n^1,\xi_n^2)\in\mathcal{S}_{g_1,g_2}(u_n,v_n)$$
fulfilling {\rm (i)}--{\rm(ii)} of Definition \ref{Definitions1} and, moreover, 
\begin{itemize}
\item[${\rm (iii)}'$] $\displaystyle{\lim_{n\to\infty}}\langle(\mathcal D (u_n,v_n),
(u_n-w,v_n-v)\rangle_{3}=0$.
\end{itemize}
\end{Definition}
\section{Existence results}\label{Sections3}
The hypotheses below on the right-hand sides of \eqref{eqnss1}--\eqref{eqnss2} will be posited. 
\begin{itemize}
\item[$H(h)$] The multifunctions $h_i:\Omega\times\real^2\times\real^{2N}\to 2^\real$, $i=1,2$, take convex compact values. Moreover,
\vskip2pt
\noindent $(1)$ $z\mapsto h_i(z,r_1,r_2,\zeta_1,\zeta_2)$ is measurable for all $(r_1,r_2,\zeta_1,\zeta_2)\in\real^2\times\real^{2N}$ and $(r_1,r_2,\zeta_1,\zeta_2)\mapsto h_i(z,r_1,r_2,\zeta_1,\zeta_2)$ is upper semi-continuous for a.e. $z\in\Omega$. 
\vskip2pt
\noindent $(2)$ There exist $m_1,m_2>0$, $\sigma_j,\theta_j>0$, with $j=1,2,3,4$, $\delta_1\in L^{(p_1^*)'}(\Omega)$, and $\delta_2\in L^{(p_3^*)'}(\Omega)$ such that
$$\frac{\sigma_1}{p_1^*}+\frac{\sigma_2}{p_3^*}\le\frac{1}{(p_1^*)'},\quad
\frac{\sigma_3}{p_1^*}+\frac{\sigma_4}{p_3^*}\le\frac{1}{(p_3^*)'},\quad
\frac{\theta_1}{p_1}+\frac{\theta_2}{p_2}\le\frac{1}{(p_1^*)'},\quad
\frac{\theta_3}{p_1}+\frac{\theta_4}{p_2}\le\frac{1}{(p_3^*)'}$$
as well as
\begin{align*}
|h_1 & (z,r_1,r_2,\zeta_1,\zeta_2)|\\
\le & m_1(|r_1|^{p_1^*-1}+|r_2|^\frac{p_3^*}{(p_1^*)'} +|r_1|^{\sigma_1}|r_2|^{\sigma_2}+|\zeta_1|^\frac{p_1}{(p_1^*)'}
+|\zeta_2|^\frac{p_3}{(p_1^*)'}+|\zeta_1|^{\theta_1}|\zeta_2|^{\theta_2})+\delta_1(z),
\end{align*}
\begin{align*}
|h_2 & (z,r_1,r_2,\zeta_1,\zeta_2)|\\
\le & m_2(|r_1|^\frac{p_1^*}{(p_3^*)'}+|r_2|^{p_3^*-1} +|r_1|^{\sigma_3}|r_2|^{\sigma_4}+|\zeta_1|^\frac{p_1}{(p_3^*)'}
+|\zeta_2|^\frac{p_3}{(p_3^*)'}+|\zeta_1|^{\theta_3}|\zeta_2|^{\theta_4})+\delta_2(z)
\end{align*}
for all $(z,r_1,r_2,\zeta_1,\zeta_2)\in\Omega\times\real^2\times\real^{2N}$.
\vskip2pt
\noindent $(3)$ There are $m_3,m_4,m_5,m_6>0$ and $\delta_3,\delta_4\in L^1(\Omega)_+$ satisfying
\begin{align*}
&\eta_1 r_1\le m_3(|r_1|^{p_1}+|r_2|^{p_3})+m_4(|\zeta_1|^{p_1}+|\zeta_2|^{p_3})+\delta_3(z),\\
&\eta_2 r_2\le m_5(|r_1|^{p_1}+|r_2|^{p_3})+m_6(|\zeta_1|^{p_1}+|\zeta_2|^{p_3})+\delta_4(z),
\end{align*} 
for every $\eta_i\in h_i(z,r_1,r_2,\zeta_1,\zeta_2)$, $i=1,2$, $(z,r_1,r_2,\zeta_1,\zeta_2)\in\Omega\times\real^2\times\real^{2N}$.
\item[$H(g)$] The multifunctions $g_i:\partial\Omega_2\times\real^2\to 2^\real$, $i=1,2$, take convex compact values. Moreover,
\vskip2pt
\noindent $(1)$ $z\mapsto g_i(z,r_1,r_2)$ is measurable for all $(r_1,r_2)\in\real^2$ and $(r_1,r_2)\mapsto g_i(z,r_1,r_2)$ is upper semi-continuous for a.e. $z\in \partial \Omega_2$.
\vskip2pt
\noindent $(2)$ There exist $m_7,m_8>0$, $\sigma_j>0$, with $j=5,6,7,8$, $\delta_5\in L^{(p_1^*)'}(\partial\Omega_2)$, and $\delta_6\in L^{(p_3^*)'}(\partial\Omega_2)$ such that
$$\frac{\sigma_5}{p_{1,*}}+\frac{\sigma_6}{p_{3,*}}\le\frac{1}{(p_{1,*})'},\quad \frac{\sigma_7}{p_{1,*}}+\frac{\sigma_8}{p_{3,*}}\le\frac{1}{(p_{3,*})'}$$
as well as 
\begin{equation*}
\begin{split}
|g_1(z,r_1,r_2)|\le m_7\left(|r_1|^{p_{1,*}-1}+|r_2|^\frac{p_{3,*}}{(p_{1,*})'} +|r_1|^{\sigma_5}|r_2|^{\sigma_6}\right)+\delta_5(z),\\
|g_2(z,r_1,r_2)|\le m_8\left(|r_1|^\frac{p_{1,*}}{(p_{3,*})'}+ |r_2|^{p_{3,*}-1}+|r_1|^{\sigma_7}|r_2|^{\sigma_8}\right)+\delta_6(z)
\end{split}
\end{equation*}
for every $(z,r_1,r_2)\in\partial\Omega_2\times\real^2$.
\vskip2pt
\noindent $(3)$ There are $m_9,m_{10}>0$ and $\delta_7,\delta_8\in L^1(\partial\Omega_2)_+$  fulfilling
\begin{equation*}
\xi_1 r_1\le m_9(|r_1|^{p_1}+|r_2|^{p_3})+\delta_7(z),\quad
\xi_2 r_2\le m_{10}(|r_1|^{p_1}+|r_2|^{p_3})+\delta_8(z),
\end{equation*} 
for all $\xi_i\in g_i(z,r_1,r_2)$, $i=1,2$, $(z,r_1,r_2)\in\partial\Omega_2 \times\real^2$.
\end{itemize}
With the notation introduced in Sections \ref{Section1}--\ref{Sections2}, one has
\begin{Theorem}\label{theorem1}
Let $H(0)$, $H(h)$, and $H(g)$ be satisfied. If, moreover,
\begin{equation}\label{coerc}
m_4+m_6+(m_3+m_5)\lambda_{2j-1}+(m_9+m_{10})\lambda_{2j}<1,\;\; j=1,2, 
\end{equation}
then, for every $\alpha,\beta\in\real$, problem \eqref{eqnss1}--\eqref{eqnss2} admits a generalized solution.
\end{Theorem}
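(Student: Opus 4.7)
The plan is to combine Galerkin's method with the surjectivity result Theorem \ref{surjectivetheorem}. Using separability of $U$ and $V$, fix nested increasing finite-dimensional subspaces $U_1\subset U_2\subset\cdots\subset U$ and $V_1\subset V_2\subset\cdots\subset V$ with dense unions, and set $X_n:=U_n\times V_n$. On each $X_n$ define the multi-valued map $\Phi_n:X_n\to 2^{X_n^*}$ by
\[
\Phi_n(u,v):=\bigl\{(\mathcal D_1 u+\xi^1-\eta^1,\ \mathcal D_2 v+\xi^2-\eta^2)|_{X_n}\,:\,(\eta^1,\eta^2)\in\mathcal S_{h_1,h_2}(u,v),\ (\xi^1,\xi^2)\in\mathcal S_{g_1,g_2}(u,v)\bigr\}.
\]
The aim is to verify the hypotheses of Theorem \ref{surjectivetheorem} for $\Phi_n$, extract a zero $(u_n,v_n)$ together with selections $(\eta_n^1,\eta_n^2),(\xi_n^1,\xi_n^2)$, and then pass to the weak limit.

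Proposition \ref{supresult} supplies measurable selections of $h_i$ and $g_i$ composed with measurable $(u,v)$, while the growth bounds $H(h)(2),H(g)(2)$, together with the embeddings \eqref{embedone}, make $\mathcal S_{h_1,h_2}(u,v)$ and $\mathcal S_{g_1,g_2}(u,v)$ non-empty, convex, and weakly compact in the relevant dual spaces; hence $\Phi_n$ is convex compact-valued. Upper semi-continuity of $\Phi_n$ follows from upper semi-continuity of $h_i,g_i$ in the $(r,\zeta)$-variables combined with continuity of $\mathcal D_1,\mathcal D_2$ in finite dimension. The decisive verification is coercivity: pairing $\Phi_n(u,v)$ with $(u,v)$ produces, via \eqref{defDone}--\eqref{defDtwo},
\[
\int_\Omega\bigl(|\nabla u|^{p_1}+\mu_1|\nabla u|^{q_1}\bigr)\,dz-\alpha\int_\Omega\bigl(|\nabla u|^{p_2}+\mu_2|\nabla u|^{q_2}\bigr)\,dz
\]
plus the analogous pairing on the $v$-component. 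Since $p_{2j-1}>p_{2j}$, $q_{2j-1}>q_{2j}$ and $\mu_{2j-1}\ge\mu_{2j}$ by $H(0)$, Young's inequality lets me absorb $|\alpha|$ and $|\beta|$ times the subordinate modular into the leading one. On the other hand, $H(h)(3)$ and $H(g)(3)$ control the reactions $\eta^1 u+\eta^2 v-\xi^1 u-\xi^2 v$ by $L^{p_1}(\Omega),L^{p_3}(\Omega)$ and boundary $L^{p_1}(\partial\Omega_2),L^{p_3}(\partial\Omega_2)$ norms of $u,v$ together with appropriate gradient terms, which via \eqref{embedone} reduce to a multiple of $\|u\|_U^{p_1}+\|v\|_V^{p_3}$ with coefficient $m_4+m_6+(m_3+m_5)\lambda_{2j-1}+(m_9+m_{10})\lambda_{2j}$. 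Inequality \eqref{coerc} forces this coefficient strictly below the leading one, yielding coercivity.

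Theorem \ref{surjectivetheorem} then delivers $(u_n,v_n)\in X_n$ with selections $(\eta_n^1,\eta_n^2)\in\mathcal S_{h_1,h_2}(u_n,v_n)$, $(\xi_n^1,\xi_n^2)\in\mathcal S_{g_1,g_2}(u_n,v_n)$ satisfying
\[
\langle\mathcal D(u_n,v_n)+(\xi_n^1,\xi_n^2)-(\eta_n^1,\eta_n^2),(\varphi,\psi)\rangle_{3}=0\quad\forall\,(\varphi,\psi)\in X_n.
\]
Testing this identity with $(u_n,v_n)$ and reusing the coercivity estimate yields a uniform bound for $\{(u_n,v_n)\}$ in $U\times V$; then $H(h)(2)$ and $H(g)(2)$ transfer this bound to $\{(\eta_n^i)\}$, $\{(\xi_n^i)\}$, and $\{\mathcal D(u_n,v_n)\}$ in the respective dual spaces. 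Reflexivity extracts a subsequence with $(u_n,v_n)\wto(u,v)$, settling Definition \ref{Definitions1}(i). Writing $A_n:=\mathcal D(u_n,v_n)+(\xi_n^1,\xi_n^2)-(\eta_n^1,\eta_n^2)$, the Galerkin identity forces $\langle A_n,(\varphi,\psi)\rangle_3=0$ eventually for each $(\varphi,\psi)\in\bigcup_n X_n$; together with the uniform dual bound and density of $\bigcup_n X_n$ in $U\times V$, this gives $A_n\wto 0$ in $U^*\times V^*$, which is (ii). For (iii), split $\langle A_n,(u_n-u,v_n-v)\rangle_3=\langle A_n,(u_n,v_n)\rangle_3-\langle A_n,(u,v)\rangle_3$: the first pairing is zero by the Galerkin identity, and the second tends to zero after approximating $(u,v)$ strongly in $U\times V$ by some $(\tilde u_n,\tilde v_n)\in X_n$ and invoking once more $\langle A_n,(\tilde u_n,\tilde v_n)\rangle_3=0$ together with the dual bound.

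The step I expect to be the main obstacle is the coercivity estimate. The competing signs in $\mathcal D_1,\mathcal D_2$ destroy ellipticity and monotonicity, so the exponent ordering in $H(0)$ must be combined with Young's inequality to absorb the subordinate modular terms while tracking the precise constants coming from $H(h)(3),H(g)(3)$ and the embeddings $\lambda_j$; inequality \eqref{coerc} is the exact bookkeeping threshold that closes this estimate, and the remainder of the proof — existence of Galerkin zeros, a priori bounds, and verification of (i)--(iii) — rests entirely on having this single coefficient strictly less than one.
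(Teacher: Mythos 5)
Your proposal follows essentially the same route as the paper's proof: a Galerkin scheme on nested finite-dimensional subspaces, verification of the hypotheses of Theorem~\ref{surjectivetheorem} (with the coercivity estimate closed exactly as you describe, via Young's inequality absorbing the competing modular terms under $H(0)$ and the bookkeeping threshold \eqref{coerc}), followed by a priori bounds and the density/weak-limit argument for conditions (i)--(iii) of Definition~\ref{Definitions1}. The argument is correct and matches the paper's structure, including the final splitting for (iii), which the paper handles by noting $\langle A_n,(u_n,v_n)\rangle_3=0$ and $A_n\rightharpoonup 0$.
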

\begin{proof}
Pick $(u,v)\in U\times V$. We claim that
$$\mathcal{S}_{h_1,h_2}(u,v)\neq \emptyset\quad\mbox{and}\quad\mathcal{S}_{g_1,g_2}(u,v)\neq\emptyset.$$
In fact, by $H(h)(1)$ and Proposition \ref{supresult}, the multifunction $h_1(\cdot,u,v,\nabla u,\nabla v)$ possesses a measurable selection $\eta_1:\Omega\to\real$. Combining $H(h)(2)$ with H\"older's inequality produces
\begin{align}\label{eqnss3.1}
\|\eta_1\|_{(p_1^*)'}^{(p_1^*)'} 
& \le\int_\Omega\Big[ m_1\Big(|u|^{p_1^*-1}+|v|^\frac{p_3^*}{(p_1^*)'} +|u|^{\sigma_1}|v|^{\sigma_2}\nonumber\\
& \hskip3cm +|\nabla u|^\frac{p_1}{(p_1^*)'}+|\nabla v|^\frac{p_3}{(p_1^*)'}
			+|\nabla u|^{\theta_1}|\nabla v|^{\theta_2}\Big)+\delta_1\Big]^{(p_1^*)'}\,dz\\
& \le C\Big(\|\delta_1\|_{(p_1^*)'}^{(p_1^*)'}
+\|u\|_{p_1^*}^{p_1^*}+\|v\|^{p_3^*}_{p_3^*} +\|\nabla u\|_{p_1}^{p_1}
+\|\nabla v\|_{p_3}^{p_3}\nonumber\\
&\hskip1cm +\|u\|_{p_1^*}^{\sigma_1(p_1^*)'}\,
\|v\|_{\left(\frac{p_1^*}{\sigma_1 (p_1^*)'}\right)'\sigma_2(p_1^*)'}^{\sigma_2(p_1^*)'}
+\|\nabla u\|_{p_1}^{\theta_1 (p_1^*)'}\,
\|\nabla v\|_{\left(\frac{p_1}{\theta_1(p_1^*)'}\right)'\theta_2(p_1^*)'} ^{\theta_2(p_1^*)'}\Big),\nonumber
\end{align}
namely $\eta_1\in L^{(p_1^*)'}(\Omega)$. Since a similar reasoning applies to $h_2(\cdot,u,v,\nabla u,\nabla v)$, we achieve $\mathcal{S}_{h_1,h_2}(u,v)\neq\emptyset$. Likewise, because of $H(g)(1)(2)$, one has $\mathcal{S}_{g_1,g_2}(u,v)\neq\emptyset$. Recalling that $h_i$ and $g_i$ take convex closed values, an elementary computation shows that the multifunctions
$$\mathcal{S}_{h_1,h_2}:U\times V\to 2^{L^{(p_1^*)'}(\Omega)\times L^{(p_3^*)'}(\Omega)},\quad
\mathcal{S}_{g_1,g_2}:U\times V\to 2^{L^{(p_{1,*})'}(\partial\Omega_2) \times L^{(p_{3,*})'}(\partial\Omega_2)}$$
turn out convex closed-valued. Moreover, due to \eqref{eqnss3.1} and its analogues for $h_2$, $g_1$, and $g_2$,
\begin{equation}\label{prop1}
\mbox{$\mathcal{S}_{h_1,h_2}$ and $\mathcal{S}_{g_1,g_2}$ map bounded sets into bounded sets.}    
\end{equation}
We next claim that 
\begin{equation}\label{prop2}
\mbox{$\mathcal{S}_{h_1,h_2}$ and $\mathcal{S}_{g_1,g_2}$ are strongly-weakly upper semi-continuous.}    
\end{equation}
To show this for $\mathcal{S}_{h_1,h_2}$, fix a nonempty weakly closed set $B\subseteq L^{(p_1^*)'}(\Omega)\times L^{(p_3^*)'}(\Omega)$ and choose a sequence $\{(u_n,v_n)\} \subseteq\mathcal{S}_{h_1,h_2}^-(B)$ fulfilling $(u_n,v_n)\to (u,v)$ in $U\times V$. Thus, $\{(u_n,v_n)\}\subseteq U\times V$ turns out bounded. Because of \eqref{prop1}, the same holds as regards
$$\bigcup_{n\in\mathbb{N}}\mathcal{S}_{h_1,h_2}(u_n,v_n)\subseteq L^{(p_1^*)'}(\Omega)\times L^{(p_3^*)'}(\Omega).$$
So, up to sub-sequences, there exists $(\eta_n^1,\eta_n^2)\in\mathcal{S}_{h_1,h_2}(u_n,v_n)\cap B$, $n\in\mathbb{N}$, such that
$$(\eta_n^1,\eta_n^2)\wto(\eta^1,\eta^2)\quad\mbox{in}\quad
L^{(p_1^*)'}(\Omega)\times L^{(p_3^*)'}(\Omega).$$
One evidently has $(\eta^1,\eta^2)\in B$. Mazur's principle provides a sequence $\{(\kappa_n^1,\kappa_n^2)\}$ of convex combinations of $\{(\eta_n^1,\eta_n^2)\}$ satisfying
$$(\kappa_n^1,\kappa_n^2)\to (\eta^1,\eta^2)\;\;\mbox{in}\;\;
L^{(p_1^*)'}(\Omega)\times L^{(p_3^*)'}(\Omega).$$
Since $h_i$ is upper semi-continuous, this entails
$$\eta^i(z)\in h_i(z,u(z),v(z),\nabla u(z),\nabla v(z))\;\;\mbox{for a.e.}\;\; z\in \Omega.$$
Consequently, $(\eta^1,\eta^2)\in\mathcal{S}_{h_1,h_2}(u,v)\cap B$, i.e., $(u,v)\in\mathcal{S}_{h_1,h_2}^-(B)$, as desired. Concerning $\mathcal{S}_{g_1,g_2}$, the argument is analogous.

Now, let $\mathcal{D}$ be given by \eqref{defD}. Through \eqref{defDone}--\eqref{defDtwo} we easily see that
\begin{equation}\label{propD}
\mbox{$\mathcal{D}:U\times V\to U^*\times V^*$ turns out bounded and continuous.}    
\end{equation}
The space $U\times V$ is separable, therefore it possesses a Galerkin's basis, namely a sequence $\{E_n\}$ of linear sub-spaces of $U\times V$ fulfilling:
\begin{itemize}
\item[$({\rm i}_1)$] ${\rm dim}(E_n)<\infty\;\;\forall\, n\in\mathbb{N}$;
\item[$({\rm i}_2)$] $E_n\subseteq E_{n+1}\;\;\forall\, n\in\mathbb{N}$;
\item[$({\rm i}_3)$] $\overline{\cup_{n=1}^{\infty}E_n}=U\times V$.
\end{itemize}
Pick any $n\in\mathbb{N}$. Consider the problem: Find $(u,v)\in E_n$ such that
\begin{equation}\label{eqnss3.2}
\mathcal D(u,v)+\mathcal{S}_{g_1,g_2}(u,v)-\mathcal{S}_{h_1,h_2}(u,v)\ni 0\;\;\mbox{in}\;\; E_n^*.
\end{equation}
By \eqref{prop1}--\eqref{propD} the multifunction
$$(\mathcal{D}+\mathcal{S}_{g_1,g_2}-\mathcal{S}_{h_1,h_2})\lfloor_{E_n}:E_n\to 2^{E_n^*}$$
takes convex closed values, maps bounded sets into bounded sets, and is upper semi-continuous. If $(u,v)\in U\times V$, $(\xi_1,\xi_2)\in\mathcal{S}_{g_1,g_2}(u,v)$, $(\eta_1,\eta_2)\in\mathcal{G}_{h_1,h_2}(u,v)$ then, thanks to $H(h)(3)$ and $H(g)(3)$, 
\begin{align*}
\langle\mathcal D(u,v) & +(\xi_1,\xi_2)-(\eta_1,\eta_2),(u,v)\rangle_{3}\geq
\|\nabla u\|_{p_1}^{p_1}+\|\nabla v\|_{p_3}^{p_3}
-|\alpha|\|\nabla u\|_{p_2}^{p_2}-|\beta|\|\nabla v\|_{p_4}^{p_4}\\
& + \int_\Omega\mu_1|\nabla u|^{q_1} dz+\int_\Omega\mu_3|\nabla v|^{q_3} dz
-|\alpha|\int_\Omega\mu_2|\nabla u|^{q_2} dz
-|\beta|\int_\Omega\mu_4|\nabla v|^{q_4} dz\nonumber\\
& - \int_\Omega\left[ m_3(|u|^{p_1}+|v|^{p_3})+m_4(|\nabla u|^{p_1}+|\nabla v|^{p_3}) +\delta_3\right] dz\nonumber\\
& -\int_\Omega\left[ m_5(|u|^{p_1}+|v|^{p_3})+m_6(|\nabla u|^{p_1}+|\nabla v|^{p_3}) +\delta_4\right] dz\nonumber\\
&-\int_{\partial\Omega_2}\left[ m_9(|u|^{p_1}+|v|^{p_3})+\delta_7\right] d\sigma
-\int_{\partial\Omega_2}\left[m_{10}(|u|^{p_1}+|v|^{p_3})+\delta_8\right] d\sigma.
\end{align*}
Recalling that $p_{2j-1}>p_{2j}$, $j=1,2$, the Young inequality with $\varepsilon$ produces
$$\|\nabla u\|_{p_2}^{p_2}\leq c(\varepsilon)\|\nabla u\|_{p_1}^{p_1}+C(\varepsilon), \quad\|\nabla v\|_{p_4}^{p_4}\leq c(\varepsilon)\|\nabla v\|_{p_3}^{p_3}+C(\varepsilon),$$
where $c(\varepsilon)\to 0^+$ as $\varepsilon\to 0^+$. Likewise, 
\begin{equation*}
\begin{split}
\int_\Omega\mu_2|\nabla u|^{q_2} dz\leq 
c(\varepsilon)\int_\Omega\mu_1|\nabla u|^{q_1} dz+C(\varepsilon),\\
\int_\Omega\mu_4|\nabla v|^{q_4} dz\leq 
c(\varepsilon)\int_\Omega\mu_3|\nabla v|^{q_3} dz+C(\varepsilon),
\end{split}    
\end{equation*}
because $q_{2j-1}>q_{2j}$ and $\mu_{2j-1}\geq \mu_{2j}$, $j=1,2$, by hypothesis $H(0)$. Via \eqref{embedone} we thus achieve
\begin{align*}
\langle\mathcal D(u,v) & +(\xi_1,\xi_2)-(\eta_1,\eta_2),(u,v)\rangle_{3}\nonumber\\
\ge [& 1-m_4-m_6-(m_3+m_5)\lambda_1-(m_9+m_{10})\lambda_2-\hat{c}(\varepsilon)]
\|\nabla u\|_{p_1}^{p_1}\nonumber\\
& +[1-m_4-m_6-(m_3+m_5)\lambda_3-(m_9+m_{10})\lambda_4-\Tilde{c}(\varepsilon)]
\|\nabla v\|_{p_3}^{p_3}
\nonumber\\
& +[1-\hat{c}(\varepsilon)]\int_\Omega\mu_1|\nabla u|^{q_1} dz
+[1-\Tilde{c}(\varepsilon)]\int_\Omega\mu_3|\nabla v|^{q_3} dz+C_1(\varepsilon)\nonumber\\
& -\|\delta_3\|_1-\|\delta_4\|_1-\|\delta_7\|_{L^1(\partial\Omega_2)}
-\|\delta_8\|_{L^1(\partial\Omega_2)},
\end{align*}
with $\hat{c}(\varepsilon):=|\alpha| c(\varepsilon)$, $\Tilde{c}(\varepsilon):=|\beta| c(\varepsilon)$, and appropriate $C_1(\varepsilon)>0$. Put
\begin{equation*}
\begin{split}
A(\varepsilon):=1-m_4-m_6-(m_3+m_5)\lambda_1-(m_9+m_{10})\lambda_2-\hat{c}(\varepsilon),\\
B(\varepsilon):=1-m_4-m_6-(m_3+m_5)\lambda_3-(m_9+m_{10})\lambda_4-\Tilde{c}(\varepsilon).\\
C_2(\varepsilon):=C_1(\varepsilon)-\|\delta_3\|_1-\|\delta_4\|_1
-\|\delta_7\|_{L^1(\partial\Omega_2)}-\|\delta_8\|_{L^1(\partial\Omega_2)}.
\end{split}   
\end{equation*}
In view of \eqref{coerc} one has $A(\varepsilon),B(\varepsilon)>0$ provided $\varepsilon>0$ is small enough. Therefore, the previous inequality clearly entails
\begin{equation}\label{eqnss3.3}
\begin{split}
\langle\mathcal D(u,v) & +(\xi_1,\xi_2)-(\eta_1,\eta_2),(u,v)\rangle_{3}\\
\ge & A(\varepsilon)\min\{\| u\|_U^{p_1},\| u\|_U^{q_1}\}
+B(\varepsilon)\min\{\| v\|_{V}^{p_3},\| v\|_{V}^{q_3}\}+C_2(\varepsilon),
\end{split}
\end{equation}
i.e., the multifunction $(\mathcal{D}+\mathcal{S}_{g_1,g_2}-\mathcal{S}_{h_1,h_2})\lfloor_{E_n}$ turns out coercive.
%
%$$\varepsilon:=\min\left\{\frac{1-m_4-m_6-(m_3+m_5)\lambda_{2j-1}-(m_9+m_{10})\lambda_{2j}}{2},\; j=1,2\right\}.$$
%
Now, Theorem~\ref{surjectivetheorem} can be applied, and there exists a solution $(u_n,v_n)\in E_n$ to problem \eqref{eqnss3.2}, whence
\begin{equation}\label{eqnss3.4}
\mathcal D(u_n,v_n)+(\xi_n^1,\xi_n^2)-(\eta_n^1,\eta_n^2)= 0\;\;\mbox{in}\;\; E^*_n 
\end{equation}
for suitable $(\xi_n^1,\xi_n^2)\in \mathcal{S}_{g_1,g_2}(u_n,v_n)$, $(\eta_n^1,\eta_n^2) \in\mathcal{G}_{h_1,h_2}(u_n,v_n)$. From \eqref{eqnss3.3}, written with $(u,v):=(u_n,v_n)$, and \eqref{eqnss3.4} it follows
$$0\geq A(\varepsilon)\min\{\| u_n\|_U^{p_1},\| u_n\|_U^{q_1}\}+
B(\varepsilon)\min\{\| v_n\|_{V}^{p_3},\| v_n\|_{V}^{q_3}\}+C_2(\varepsilon)\quad\forall\, n\in\mathbb{N}.$$
So, $\{(w_n,v_n)\}\subseteq U\times V$ is bounded. By reflexivity one has $(u_n,v_n)\wto (u,v)$ in $U\times V$, taking a sub-sequence when necessary. Consequently, (i) of Definition \ref{Definitions1} holds. Through \eqref{propD} and \eqref{eqnss3.4} we next infer that
$\{(\eta_n^1,\eta_n^2)\}\subseteq L^{(p_1^*)'}(\Omega)\times L^{(p_3^*)'}(\Omega)$
and 
$\{(\xi_n^1,\xi_n^2)\}\subseteq L^{(p_{1,*})'}(\partial\Omega_2)\times L^{(p_{3,*})'}(\partial\Omega_2)$
turn out bounded. Therefore, always up to sub-sequences,
\begin{equation}\label{weaklim}
\mathcal{D} (u_n,v_n)+(\xi_n^1,\xi_n^2)-(\eta_n^1,\eta_n^2)\wto (u^*,v^*)\;\;\mbox{in} \;\; U^*\times V^*.
\end{equation}
Now, given any $(\varphi,\psi)\in \cup_{n=1}^\infty E_n$, Property $({\rm i}_2)$ and \eqref{eqnss3.4} yield
$$\langle (u^*,v^*),(\varphi,\psi)\rangle_3=\lim_{n\to\infty}\langle\mathcal{D}(u_n,v_n)+(\xi_n^1,\xi_n^2)-(\eta_n^1,\eta_n^2),(\varphi,\psi)\rangle_3=0.$$
Because of $({\rm i}_3)$ this forces 
\begin{equation}\label{zerosol}
(u^*,v^*)= 0\;\;\mbox{in}\;\; U^*\times V^*,    
\end{equation}
namely condition (ii) is true. Finally, through \eqref{eqnss3.4}, \eqref{weaklim}, and \eqref{zerosol} we arrive at
\begin{equation*}
\begin{split}
\langle\mathcal{D}(u_n,v_n)&+(\xi_n^1,\xi_n^2)-(\eta_n^1,\eta_n^2),(u_n-u,v_n-v)\rangle_3\\
& =-\langle\mathcal{D}(u_n,v_n)+(\xi_n^1,\xi_n^2)-(\eta_n^1,\eta_n^2), (u,v)\rangle_3\to 0
\end{split}
\end{equation*}
as $n\to\infty$, which shows (iii) in Definition \ref{Definitions1}. Summing up, the pair $(u,v)$ turns out a generalized solution to \eqref{eqnss1}--\eqref{eqnss2}.
\end{proof}
The existence of strongly generalized solutions will be established under the following assumptions.
\begin{itemize}
\item[$H(h)$] $(2')$ There exist $d_1,d_2>0$ $\sigma_j,\theta_j>0$, with $j=1,2,3,4$, $1<\kappa_1<p_1^*$, $1<\kappa_2<p_3^*$, and $\pi_j\in L^{(\kappa_j)'}(\Omega)$, $j=1,2$, such that
$$\frac{\sigma_1}{\kappa_1}+\frac{\sigma_2}{\kappa_2}\le\frac{1}{\kappa_1'},\quad \frac{\sigma_3}{\kappa_1}+\frac{\sigma_4}{\kappa_2}\le\frac{1}{\kappa_2'},\quad \frac{\theta_1}{p_1}+\frac{\theta_2}{p_2}\le \frac{1}{\kappa_1'},\quad
\frac{\theta_3}{p_1}+\frac{\theta_4}{p_2}\le \frac{1}{\kappa_2'}$$
as well as 
\begin{align*}
|h_1 & (z,r_1,r_2,\zeta_1,\zeta_2)|\\
\le & d_1\left(|r_1|^\frac{p_1^*}{\kappa_1'}+|r_2|^\frac{p_3^*}{\kappa_1'} +|r_1|^{\sigma_1}|r_2|^{\sigma_2}+|\zeta_1|^\frac{p_1}{\kappa_1'}
+|\zeta_2|^\frac{p_3}{\kappa_1'}+|\zeta_1|^{\theta_1}|\zeta_2|^{\theta_2}\right) +\pi_1(z),\\
|h_2 & (z,r_1,r_2,\zeta_1,\zeta_2)|\\
\le & d_1\left(|r_1|^\frac{p_1^*}{\kappa_2'}+|r_2|^\frac{p_3^*}{\kappa_2'} +|r_1|^{\sigma_3}|r_2|^{\sigma_4}+|\zeta_1|^\frac{p_1}{\kappa_2'}
+|\zeta_2|^\frac{p_3}{\kappa_2'}+|\zeta_1|^{\theta_3}|\zeta_2|^{\theta_4}\right)
+\pi_2(z)
\end{align*}
for every $(z,r_1,r_2,\zeta_1,\zeta_2)\in\Omega\times\real^2\times\real^{2N}$.
\item[$H(g)$] $(2')$ There are $d_3,d_4>0$, $\sigma_j>0$, with $j=5,6,7,8$, $1<\kappa_3<p_{1,*}$, $1<\kappa_4<p_{3,*}$, and $\pi_j\in L^{\kappa_j'}(\partial\Omega_2)$, $J=3,4$, fulfilling
$$\frac{\sigma_5}{\kappa_3}+\frac{\sigma_6}{\kappa_4}\le\frac{1}{\kappa_3'},\quad
\frac{\sigma_7}{\kappa_3}+\frac{\sigma_8}{\kappa_4}\le\frac{1}{\kappa_4'},$$
as well as 
\begin{align*}
& |g_1(z,r_1,r_2)|\le d_3\left(|r_1|^\frac{p_{1,*}}{\kappa_3'}
+|r_2|^\frac{p_{3,*}}{\kappa_3'}
+|r_1|^{\sigma_5}|r_2|^{\sigma_6}\right)+\pi_3(z),\\
& |g_2(z,r_1,r_2)|\le d_4\left(|r_1|^\frac{p_{1,*}}{\kappa_4'}
+|r_2|^\frac{p_{3,*}}{\kappa_4'}
+|r_1|^{\sigma_7}|r_2|^{\sigma_8}\right)+\pi_4(z)
\end{align*}
for all $(z,r_1,r_2)\in\partial\Omega_2\times\real^{2}$.
\end{itemize} 
\begin{Theorem}
If $H(0)$, $H(h)(1)(2')(3)$, $H(g)(1)(2')(3)$, and \eqref{coerc} are satisfied then, for every $\alpha,\beta\in\real$, problem \eqref{eqnss1}--\eqref{eqnss2} admits a strong generalized solution $(u,v)$. Moreover, $(u,v)$ is a weak solution once $\max\{\alpha,\beta\}<0$.
\end{Theorem}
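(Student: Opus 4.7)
The approach I propose is to adapt, almost verbatim, the Galerkin scheme used in the proof of Theorem~\ref{theorem1}, and to extract (iii)$'$ from the extra compactness that the strict subcritical exponents $\kappa_j$ provide. First I would check that all estimates of Theorem~\ref{theorem1} go through with $(p_1^*)'$ replaced by $\kappa_1'$ and $(p_3^*)'$ replaced by $\kappa_2'$ (and the analogous substitutions on $\partial\Omega_2$): namely, the non-emptiness and convex closedness of $\mathcal{S}_{h_1,h_2}(u,v)$ and $\mathcal{S}_{g_1,g_2}(u,v)$, the bounded-to-bounded property \eqref{prop1}, the strong-to-weak upper semi-continuity \eqref{prop2}, and the coercivity estimate \eqref{eqnss3.3}, the last one being untouched because $H(h)(3)$ and $H(g)(3)$ are retained. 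This yields, for each $n$, a Galerkin solution $(u_n,v_n)\in E_n$ with measurable selections $(\eta_n^1,\eta_n^2)$ and $(\xi_n^1,\xi_n^2)$ satisfying \eqref{eqnss3.4}, together with (i) and (ii) of Definition~\ref{Definitions2}, exactly as before.

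The decisive new point is that the inequalities $\kappa_1<p_1^*$, $\kappa_2<p_3^*$, $\kappa_3<p_{1,*}$, $\kappa_4<p_{3,*}$ are strict, so the embeddings
\[
U\hookrightarrow L^{\kappa_1}(\Omega),\quad V\hookrightarrow L^{\kappa_2}(\Omega),\quad U\hookrightarrow L^{\kappa_3}(\partial\Omega_2),\quad V\hookrightarrow L^{\kappa_4}(\partial\Omega_2)
\]
are compact. Combined with $(u_n,v_n)\rightharpoonup(u,v)$ in $U\times V$, this gives $u_n\to u$ and $v_n\to v$ strongly in each of those four spaces. The primed growth $H(h)(2')$, $H(g)(2')$ keeps $\{\eta_n^i\}$ bounded in $L^{\kappa_i'}(\Omega)$ and $\{\xi_n^i\}$ bounded in $L^{\kappa_{i+2}'}(\partial\Omega_2)$. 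Hölder's inequality then forces
\[
\int_\Omega\eta_n^1(u_n-u)\,dz+\int_\Omega\eta_n^2(v_n-v)\,dz\to 0,
\]
together with the analogous boundary integrals involving $\xi_n^i$. Subtracting these from property (iii) of Definition~\ref{Definitions1} (which, as already noticed in Theorem~\ref{theorem1}, is automatic from (i)--(ii) and the Galerkin identity \eqref{eqnss3.4}) produces
\[
\lim_{n\to\infty}\langle\mathcal{D}(u_n,v_n),(u_n-u,v_n-v)\rangle_3=0,
\]
i.e., (iii)$'$, so $(u,v)$ is a strong generalized solution.

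For the last sentence, assume $\alpha,\beta<0$. The integrands defining $\mathcal{D}_1$ and $\mathcal{D}_2$ in \eqref{defDone}--\eqref{defDtwo} are now sums (not differences) of double-phase densities, so both operators are monotone and enjoy the $(S_+)$-property. Monotonicity yields $\liminf_n\langle\mathcal{D}_1 u_n,u_n-u\rangle_1\geq 0$ and likewise for $\mathcal{D}_2$; combining these inequalities with (iii)$'$ forces each bracket to tend to $0$ separately, after which the $(S_+)$-property delivers the strong convergences $u_n\to u$ in $U$ and $v_n\to v$ in $V$. The continuity of $\mathcal{D}$ recorded in \eqref{propD} plus the bounds on $\eta_n^i,\xi_n^i$ in the relevant Lebesgue spaces produces weak limits $\eta^i,\xi^i$, and a Mazur-lemma argument combined with the upper semi-continuity of $h_i,g_i$ (as in the proof of \eqref{prop2}) identifies them as elements of $\mathcal{S}_{h_1,h_2}(u,v)$ and $\mathcal{S}_{g_1,g_2}(u,v)$. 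Passing to the limit in \eqref{eqnss3.4} and invoking property $({\rm i}_3)$ then delivers $\mathcal{D}(u,v)+(\xi^1,\xi^2)-(\eta^1,\eta^2)=0$ in $U^*\times V^*$, which is exactly the weak formulation of \eqref{eqnss1}--\eqref{eqnss2}.

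The hard part will be the exponent bookkeeping underpinning the compactness step: I must verify that from $(2')$ every power $|\eta_n^i|^{\kappa_i'}$ and $|\xi_n^i|^{\kappa_{i+2}'}$ is controlled by a finite sum of terms whose integrals are in turn bounded by the $L^{p_1^*},L^{p_3^*},L^{p_1},L^{p_3}$ norms of $u_n,v_n,\nabla u_n,\nabla v_n$, \emph{and} that the constraints on $\sigma_j,\theta_j$ are precisely the Hölder exponents that make the cross-terms $|r_1|^{\sigma_i}|r_2|^{\sigma_{i+1}}$ and $|\zeta_1|^{\theta_i}|\zeta_2|^{\theta_{i+1}}$ fit into those spaces; only once this is checked does the pairing with the strongly convergent differences $u_n-u,v_n-v$ close the argument for (iii)$'$.
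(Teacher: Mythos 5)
Your proposal is correct and follows essentially the same route as the paper: the paper simply invokes Theorem~\ref{theorem1} directly (since $(2')$ implies $(2)$) rather than re-running the Galerkin scheme, then uses the compact embeddings into $L^{\kappa_1}(\Omega)\times L^{\kappa_2}(\Omega)$ and $L^{\kappa_3}(\partial\Omega_2)\times L^{\kappa_4}(\partial\Omega_2)$ to kill the convective and boundary pairings and obtain ${\rm (iii)}'$, and finally the $(S_+)$-property plus the strong--weak upper semi-continuity of the Nemytskii multifunctions to identify the limit selections when $\max\{\alpha,\beta\}<0$. Your extra care in splitting $\langle\mathcal{D}(u_n,v_n),(u_n-u,v_n-v)\rangle_3\to 0$ into the two separate brackets via monotonicity before applying $(S_+)$ componentwise is a detail the paper leaves implicit, but it does not change the argument.
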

\begin{proof}
Since $H(h)(2')$ and $H(g)(2')$ are stronger that $H(h)(2)$ and $H(g)(2)$, respectively, we can use Theorem \ref{theorem1}, which provides $(u,v)\in U\times V$ as well as three bounded sequences
\begin{equation*}
\begin{split}
(u_n,v_n) & \in U\times V,\\
(\eta_n^1,\eta_n^2) & \in\mathcal{S}_{h_1,h_2}(u_n,v_n)\cap(L^{(\kappa_1)'}(\Omega)\times L^{(\kappa_2)'}(\Omega)),\\
(\xi_n^1,\xi_n^2) & \in\mathcal{S}_{g_1,g_2}(u_n,v_n)\cap(L^{(\kappa_3)'}(\partial\Omega_2) \times L^{(\kappa_4)'}(\partial\Omega_2))    
\end{split}   
\end{equation*}
fulfilling conditions (i)--(iii) in Definition \ref{Definitions1}. From $\kappa_1<p_1^*$, $\kappa_3<p_{1,\ast}$, $\kappa_2<p_3^*$, and $\kappa_4<p_{3,\ast}$ it follows that both embeddings
$$U\times V\hookrightarrow L^{\kappa_1}(\Omega)\times L^{\kappa_2}(\Omega),\quad
U\times V\hookrightarrow L^{\kappa_3}(\partial\Omega_2)\times L^{\kappa_4}(\partial\Omega_2)$$
turn out compact. Hence, up to sub-sequences,
$$(u_n,v_n)\to (u,v)\;\;\mbox{in}\;\; L^{\kappa_1}(\Omega)\times L^{\kappa_2}(\Omega)\;\;
\mbox{and in}\;\; L^{\kappa_3}(\partial\Omega_2)\times L^{\kappa_4}(\partial\Omega_2).$$
This evidently implies
\begin{equation}\label{splus}
\begin{split}
0= & \lim_{n\to\infty}\langle(\mathcal{D}(u_n,v_n)+(\xi_n^1,\xi_n^2)-(\eta_n^1,\eta_n^2),(u_n-u,v_n-v)\rangle_3\\
= & \lim_{n\to\infty}\langle(\mathcal{D}(u_n,v_n),(u_n-u,v_n-v)\rangle_3,
\end{split}
\end{equation} 
i.e., ${\rm (iii)}'$ of Definition \ref{Definitions2} also holds. Suppose now $\max\{ \alpha,\beta\}<0$. Then the operators $\mathcal{D}_i$, $i=1,2$, enjoy the $(S_+)$-property, as this makes the classical $(p,q)$-Laplacian (cf., e.g., \cite[Proposition 2.1]{GM}). Thus, \eqref{splus}
%
%$$\lim_{n\to\infty}\langle(\mathcal{D}_1,\mathcal{D}_2)(u_n,v_n),(u_n-u,v_n-v)\rangle_3=0$$
%
forces
\begin{equation}\label{strongconv}
(u_n,v_n)\to (u,v)\;\;\mbox{in}\;\;U\times V.    
\end{equation}
By boundedness we then get
\begin{equation}\label{weakconv}
\begin{split}
(\eta^1_n,\eta^2_n) & \wto(\eta^1,\eta^2)\;\;\mbox{in}\;\; L^{\kappa_1'}(\Omega)\times L^{\kappa_2'}(\Omega),\\
(\xi_n^1,\xi_n^2) & \wto (\xi^1,\xi^2)\;\;\mbox{in}\;\; L^{\kappa_3'}(\partial\Omega_2) \times L^{\kappa_4'}(\partial\Omega_2). 
\end{split}
\end{equation}
So, thanks to \eqref{prop2} and \cite[Proposition 2.5]{Sm}, 
\begin{equation}\label{final}
(\eta^1,\eta^2)\in\mathcal{S}_{h_1,h_2}(u,v),\quad(\xi^1,\xi^2)\in \mathcal{S}_{g_1,g_2}(u,v).
\end{equation}
Finally, because of (ii) in Definition \ref{Definitions1}, \eqref{strongconv}, and \eqref{propD}, for every fixed $(\varphi,\psi)\in U\times V$ one has
\begin{equation*}
\begin{split}
0= & \lim_{n\to\infty}\langle\mathcal{D}(u_n,v_n)+(\xi_n^1,\xi_n^2)-(\eta_n^1,\eta_n^2),(\varphi,\psi)\rangle_3\\
= & \langle(\mathcal{D}(u,v)+(\xi^1,\xi^2)-(\eta^1,\eta^2),(\varphi,\psi)\rangle_3,   
\end{split}
\end{equation*}
whence $\mathcal{D}(u,v)+(\xi^1,\xi^2)-(\eta^1,\eta^2)=0$ in $U^*\times V^*$. Taking \eqref{final} into account, we see that $(u,v)$ is a weak solution to \eqref{eqnss1}--\eqref{eqnss2}.
\end{proof}
\section*{Acknowledgment}
This work was supported in part by the Natural Science Foundation of Guangxi under Grant Nos. 2021GXNSFFA196004 and 2024GXNSFBA010337, the National Natural Science Foundation of China under Grant No. 12371312, and the Natural Science Foundation of Chongqing under Grant No. CSTB2024NSCQ-JQX0033.  

The second author was partly funded by: Research project of MIUR (Italian Ministry of Education, University and Research) Prin 2022 {\it Nonlinear differential problems with applications to real phenomena} (Grant No. 2022ZXZTN2).

The second author is a member of the {\em Gruppo Nazionale per l'Analisi Matematica, la Probabilit\`a e le loro Applicazioni} (GNAMPA) of the {\em Istituto Nazionale di Alta Matematica} (INdAM).
\vskip5pt
\noindent {\bf Authors' contributions}\\
All the authors contributed equally to the writing of this paper, read and approved the final manuscript.


\begin{thebibliography}{99}
%
\bibitem{ADZ}
J. Appell, E. De Pascale, and P.P. Zabreiko, {\it Multivalued superposition operators,} Rend. Sem. Mat. Univ. Padova {\bf 86} (1991), 213--231.
%
\bibitem{Galewski-Motreanu-AML-2024}
M. Galewski and D. Motreanu, {\it On variational competing $(p,q)$-Laplacian Dirichlet problem with gradient depending weight,} Appl. Math. Letters {\bf 148} (2024), 108881. 
%
\bibitem{GM}
L. Gambera and S.A. Marano, {\it Fractional Dirichlet problems with singular and non-locally convective reactions,} submitted for publication.
%
\bibitem{Gambera-Marano-Motreanu-JMAA} 
L. Gambera, S.A. Marano, and D. Motreanu, {\it Quasilinear Dirichlet systems with competing operators and convection,} J. Math. Anal. Appl. {\bf 530} (2024), 127718. 
%
\bibitem{Gambera-Marano-Motreanu-FCAA}
L. Gambera, S.A. Marano, and D. Motreanu, {\it Dirichlet problems with fractional competing operators and fractional convection,} Fract. Calc. Appl. Anal. {\bf 27} (2024), 2203--2218. 
%
\bibitem{GP1}
L. Gasinski and N.S. Papageorgiou, \textit{Nonlinear Analysis,} Chapman and Hall /CRC, Boca Raton, FL, 2005.
%
\bibitem{Liu-Livrea-Motreanu-Zeng} 
Z. Liu, R. Livrea, D. Motreanu, and S. Zeng, {\it Variational differential inclusions without ellipticity condition,} Electron. J. Qual. Theory Differ. Equ. {\bf 43} (2020), 17 pp.

%\bibitem{smo1}
%S. Mig\'{o}rski, A. Ochal,  M. Sofonea, {\it Nonlinear Inclusions and Hemivariational Inequalities. Models and Analysis of Contact Problems}, Advances in Mechanics and Mathematics,  26, Springer, New York, 2013.
\bibitem{Mo}
D. Motreanu, \textit{Quasilinear Dirichlet problems with competing operators and convection}, Open Math. \textbf{18} (2020), 1510--1517.
%
\bibitem{Mo2}
D. Motreanu, \textit{Degenerated and competing Dirichlet problems with weights and convection,} Axioms \textbf{10} (2021), 271.
%
\bibitem{Motreanu-CNSNS2023} 
D. Motreanu, {\it Hemivariational inequalities with competing operators,} Commun. Nonlinear Sci. Numer. Sim. {\bf 130} (2024), 107741.
%	
\bibitem{Mo1}
D. Motreanu, \textit{Systems of hemivariational inclusions with competing operators,} Mathematics {\bf 12} (2024), 1766.
%
\bibitem{Mu}
J. Musielak, {\it Orlicz Spaces and Modular Spaces,} Lecture Notes in Math. {\bf1034}, Springer-Verlag, Berlin, 1983.
%
\bibitem{RF}
A. Razani and G.M. Figueiredo, \textit{Degenerated and competing anisotropic $(p,q)$-Laplacians with weights,} Appl. Anal. \textbf{102} (2022), 4471--4488.
%
\bibitem{Sm}
R.E. Smithson, \textit{Multifunctions,} Nieuw Arch. Wisk. {\bf 20} (1972), 31--53.
%
\bibitem{Zeng-Vetro-Nguyen} 
S. Zeng, C. Vetro, and V.T. Nguyen, {\it Convergence analysis of singular perturbation for a class of double phase obstacle inclusions,} Siberian Math. J., to appear. 
%
\bibitem{Zhikov-1986}
V.V. Zhikov, {\it Averaging of functionals of the calculus of variations and elasticity theory,} Izv. Akad. Nauk SSSR Ser. Mat. {\bf 50} (1986),  675--710.
\end{thebibliography}
\end{document}